\newtheorem{theorem}{Theorem}
\newtheorem{lemma}{Lemma}
\newtheorem{definition}{Definition}
\newcommand\Mman{M}
\newcommand\Vman{V}
\newcommand\Wman{W}
\newcommand\eps{\varepsilon}
\newcommand\Psp{P}
\newcommand\RRR{{\mathbb R}}
\newcommand\FF{{\mathcal F}}
\newcommand\Cinf{C^{\infty}}
\newcommand\Cont[1]{\mathcal{C}^{#1}}
\newcommand\Cr[3]{\Cont{#1}(#2,#3)}
\newcommand\Ci[2]{\Cr{\infty}{#1}{#2}}
\newcommand\Int{\mathrm{Int}}
\newcommand\id{\mathrm{id}}
\newcommand\func{f}
\newcommand\gfunc{g}
\newcommand\dif{h}
\newcommand\FMPa{\FF'(\Mman,\Psp)}
\newcommand\FMSa{\FF'(\Mman, S^1)}
\newcommand\FMPl{\FF_{l.c.}(\Mman,\Psp)}
\newcommand\FMSl{\FF_{l.c.}(\Mman,S^1)}
\newcommand\FMSc{\FF_{cov}(\Mman, S^1)}
\newcommand\FMRxi{\FF_{\xi}(\Mman,I)}
\newcommand\FMprRxi{\FF_{\xi}(\Mman',I)}
\newcommand\CMS{\Ci{\Mman}{S^1}}
\newcommand\hfunc{\widehat\func}
\newcommand\hgfunc{\widehat\gfunc}
\newcommand\hMman{\widehat\Mman}
\newcommand\KRf{\Gamma_{\func}}
\newcommand\KRg{\Gamma_{\gfunc}}
\begin{document}

\author{Sergiy Maksymenko}
\title[Circle-valued Morse functions on surfaces]
{Deformations of circle-valued Morse functions on surfaces}
\address{Topology dept., Institute of Mathematics of NAS of Ukraine, Te\-re\-shchenkivska st. 3, Kyiv, 01601 Ukraine}
\email{maks@imath.kiev.ua}
\date{09.06.2010}

\keywords{Circle-valued Morse map, surface,}
\subjclass[2000]{37C05,57S05,57R45}

\thanks{This research is partially supported by grant of Ministry of Science and Education of Ukraine, No. M/150-2009}

\begin{abstract}
Let $M$ be a smooth connected orientable compact surface.
Denote by $\mathcal{F}_{cov}(M, S^1)$ the space of all Morse functions $f:M\to S^1$ having no critical points on $\partial M$ and such that for every connected component $V$ of $\partial M$, the restriction $f:V \to S^1$ is either a constant map or a covering map.
Endow $\mathcal{F}_{cov}(M, S^1)$ with $C^{\infty}$-topology.
In this note the connected components of $\mathcal{F}_{cov}(M, S^1)$ are classified.
This result extends the results of S.~V.~Matveev, V.~V.~Sharko, and the author for the case of Morse functions being locally constant on $\partial M$.
\end{abstract}

\maketitle

\section{Introduction}

Let $\Mman$ be a compact surface and $\Psp$ be either the real line $\RRR$ or the circle $S^1$.
Denote by $\FMPa$ the subset of $\CMS$ consisting of maps $\func:\Mman\to\Psp$ such that 
\begin{enumerate}
 \item[(1)]
all critical points of $\func$ are non-degenerate and belongs to the interior of $\Mman$, so $\func$ is a \emph{Morse}.
\end{enumerate}

Let also $\FMPl$ be the subset of $\FMPa$ consisting of maps $\func:\Mman\to\Psp$ such that 
\begin{enumerate}
 \item[(2)]
$\func|_{\partial\Mman}$ is a \emph{locally constant} map, that is for every connected component $\Vman$ of $\partial\Mman$ the restriction of $\func$ to $\Vman$ is a constant map.
\end{enumerate}

Moreover, for the case $\Psp=S^1$ let $\FMSc$ be another subset of $\FMSa$ consisting of maps $\func:\Mman\to S^1$ such that 
\begin{enumerate}
 \item[(2$'$)]
for every connected component $\Vman$ of $\partial\Mman$ the restriction of $\func$ to $\Vman$ is either a \emph{constant} map or a \emph{covering} map.
\end{enumerate}
Thus 
$$
\FMSl \subset \FMSc.
$$
Endow all these spaces $\FMPa$, $\FMPl$, and $\FMSc$ with the corresponding $\Cinf$-topologies.
The connected components of the spaces $\FMPl$ were described in~\cite{Sharko:InstMath:1998, Kudryavtseva:MatSb:1999, Maks:InstMath:1998, Maks:CMH:2005}.
The aim of this note is to describe the connected components of the space $\FMSc$ for the case when $\Mman$ is orientable.

To formulate the result fix an orientation of $\Psp$ and let $\func\in\FMPa$.
Then for each (non-degenerate) critical point of $\func$ we can define its index with respect to a given orientation of $S^1$.
Denote by $c_i=c_i(\func)$, $i=0,1,2$, the total number of critical points of $\func$ of index $i$.

Moreover, suppose $\Vman$ is a connected component of $\partial\Mman$ such that the restriction of $\func$ to $\Vman$ is a constant map.
Then we associate to $\Vman$ the number $\eps_{\Vman}(\func):=+1$ (resp. $\eps_{k}(\func):=-1$) whenever the value $\func(\Vman)$ is a local maximum (resp. minimum) with respect to the orientation of $\Psp$.
If $\func|_{\Vman}$ is non-constant, then we put $\eps_{\Vman}(\func)=0$.

The following theorem describes the connected components of $\FMPl$.
\begin{theorem}\label{th:pi0_FMPl}
\cite{Sharko:InstMath:1998, Kudryavtseva:MatSb:1999, Maks:InstMath:1998, Maks:CMH:2005}
Let $\func,\gfunc\in\FMPl$.
Then they belong to the same path component of $\FMPl$ iff the following three conditions hold true:
\begin{enumerate}
 \item[\rm(i)]
$\func$ and $\gfunc$ are homotopic as continuous maps (for the case $\Psp=\RRR$ this condition is, of course, trivial);
 \item[\rm(ii)]
$c_i(\func)=c_i(\gfunc)$ for $i=0,1,2$;
 \item[\rm(iii)]
$\eps_{\Vman}(\func)=\eps_{\Vman}(\gfunc)$ for every connected component $\Vman$ of $\partial\Mman$.
\end{enumerate}
If $\Psp=\RRR$ and $\func=\gfunc$ on some neighbourhood of $\partial\Mman$, then one can choose a homotopy between $\func$ and $\gfunc$ fixed near $\partial\Mman$.
\end{theorem}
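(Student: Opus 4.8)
The ``only if'' direction I would settle by a local argument. A path $\{\func_t\}$ in $\FMPl$ joining $\func$ to $\gfunc$ is in particular a homotopy, so (i) is immediate. For (ii): fixing $t_0$, compactness of $\Mman$ and non-degeneracy of the critical points of $\func_{t_0}$ let me apply the implicit function theorem to $d\func_t$ and conclude that for $t$ near $t_0$ the critical set of $\func_t$ consists of finitely many smooth curves $t\mapsto p_j(t)$, one through each critical point of $\func_{t_0}$, along which the Hessian stays non-degenerate of constant signature; hence no critical points are created or destroyed, $t\mapsto c_i(\func_t)$ is locally constant, and so constant. For (iii): on a boundary circle $\Vman$ the value $\func_t|_\Vman$ depends continuously on $t$, while the inward normal derivative of $\func_t$ along $\Vman$ never vanishes (no critical points on $\partial\Mman$) and, $\Vman$ being connected, has a well-defined sign; $\eps_\Vman(\func_t)$ is determined by that sign, hence is locally constant in $t$, hence constant.

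For the ``if'' direction I would proceed as follows. First I would reduce both $\func$ and $\gfunc$ to \emph{simple} Morse functions --- each critical level containing a single critical point, in Morse normal form there --- by a $\Cinf$-small deformation supported near the critical points, written explicitly as a path in $\FMPl$ (and kept fixed near $\partial\Mman$). Then I would encode $\func$ by its Kronrod--Reeb graph $\KRf$, the quotient of $\Mman$ by ``same value, same level-set component'', whose vertices are the classes of critical points and of the boundary components on which $\func$ is constant and whose edges are the one-parameter families of regular circles; decorated by the index at each interior vertex, the $\eps_\Vman$ at each boundary vertex, the genus of $\Mman$, and (for $\Psp=S^1$) the winding of the edges, this graph records precisely how $\Mman$ is assembled from handles along level sets.

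The core of the argument would then be to deform $\func$ inside $\FMPl$ to a \emph{canonical model} depending only on the invariants (i)--(iii), using three kinds of elementary deformation: monotone reparametrisations of $\Psp$, which reorder non-interfering critical values; ambient isotopies applied to families of regular level sets, legitimate by the isotopy extension theorem and realizable fixing everything outside a collar of the chosen levels (in particular fixing a neighbourhood of $\partial\Mman$), which change how successive level circles sit inside $\Mman$; and a family of ``handle re-attachment'' moves, modifying $\KRf$ by surgery while keeping all $c_i$ and all $\eps_\Vman$ fixed. With these I would first lower all minima below all saddles and raise all maxima above them (classical rearrangement), then normalize the saddle region into a standard nested pattern compatible with the genus and with the prescribed constant boundary values; performing the same on $\gfunc$ yields, when $\Psp=\RRR$, the \emph{same} model once the $\eps_\Vman$ are matched, hence a path from $\func$ to $\gfunc$. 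For $\Psp=S^1$ the extra ingredient is that (i) forces $\func^*[S^1]=\gfunc^*[S^1]$ in $H^1(\Mman;\ZZZ)$; cutting $\Mman$ along a common regular level set reduces the comparison to the relative $\RRR$-valued statement on the cut-open surface (whose two copies of the new boundary are reglued by the same identification), so the real case applies. The relative refinement for $\Psp=\RRR$ comes out because every move above can be performed rel a neighbourhood of $\partial\Mman$.

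The hard part, as I see it, is to prove that these three moves really suffice --- that the Kronrod--Reeb graph of an arbitrary $\func\in\FMPl$ can be brought to the canonical one, equivalently that no invariant of the graph beyond the $c_i$, the $\eps_\Vman$, the genus, and (for $\Psp=S^1$) the cohomology class survives deformation in $\FMPl$. I would attack this by induction on the number of critical points: isolate an extreme critical point, slide it past the others, and use the classification of compact surfaces to reconnect the neighbouring level sets. The delicate point throughout is to control the mapping classes of the diffeomorphisms produced by the isotopy extension theorem, and it is exactly here that the orientability of $\Mman$ enters. Once this combinatorial core is established, the rest --- necessity, the reduction to simple functions, the $S^1\to\RRR$ reduction, and the relative version --- is routine.
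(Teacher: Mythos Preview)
The paper does not actually prove Theorem~\ref{th:pi0_FMPl}: it is quoted from the literature \cite{Sharko:InstMath:1998, Kudryavtseva:MatSb:1999, Maks:InstMath:1998, Maks:CMH:2005}. What the paper does prove are the extensions Theorems~\ref{th:pi0_FMSc} and~\ref{th:pi0_FMRxi}, explicitly ``following the line'' of those references, so your proposal can be compared against that methodology.

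For the $\RRR$-case your strategy coincides with the paper's: pass to generic (your ``simple'') functions, encode by the Kronrod--Reeb graph, reduce the KR-graph to a canonical form via elementary surgeries realizable as deformations in $\FMPl$, and conclude. One organisational difference: having brought both KR-graphs to canonical form, the paper invokes a lemma (Lemma~\ref{lm:KR-equivalence-of-KR-functions}) that functions with equivalent KR-functions satisfy $\gfunc=\phi^{-1}\circ\func\circ\dif$ for a diffeomorphism $\dif$ of $\Mman$, and then argues separately (Lemma~\ref{lm:f_is_homotopic_to_fh}) that $\func\circ\dif\sim\func$ in $\FMPl$. You instead aim to land both $\func$ and $\gfunc$ on literally the same canonical model. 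These are equivalent in content, but the paper's decomposition isolates the mapping-class step cleanly, whereas in your version that step is hidden inside ``control the mapping classes of the diffeomorphisms produced by the isotopy extension theorem'', which you yourself flag as the delicate point.

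Your $S^1\!\to\!\RRR$ reduction has a real gap. You write ``cutting $\Mman$ along a common regular level set'', but for a common non-exceptional value $v$ the submanifolds $\func^{-1}(v)$ and $\gfunc^{-1}(v)$ have no reason to coincide or even to be isotopic; and even once they coincide, the pieces of $\Mman\setminus\func^{-1}(v)$ need not carry the same numbers of critical points of each index for $\func$ and $\gfunc$. The paper (following \cite{Maks:InstMath:1998}, and in its proof of Theorem~\ref{th:pi0_FMSc}) handles this by a chain of reductions $(P_n)\Rightarrow(P_{n-1})\Rightarrow\cdots\Rightarrow(P_0)\Rightarrow(Q)$: first deform $\func,\gfunc$ so that $\func^{-1}(v)\cap\gfunc^{-1}(v)$ has successively fewer transverse intersection points, then arrange $\func^{-1}(v)=\gfunc^{-1}(v)$ with $\func=\gfunc$ nearby and with critical points correctly distributed over the components. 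Each step already uses the $\RRR$-case on subsurfaces. This is not routine, and your one-line reduction does not cover it.

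A minor point: you say orientability of $\Mman$ enters in the mapping-class control. Theorem~\ref{th:pi0_FMPl} carries no orientability hypothesis; orientability is assumed only in the paper's new Theorems~\ref{th:pi0_FMSc} and~\ref{th:pi0_FMRxi}.
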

The case $\Psp=\RRR$ was independently established by V.~V.~Sharko~\cite{Sharko:InstMath:1998} and S.~V.~Matveev.
Matveev's proof was generalized to the case of height functions and published in the paper~\cite{Kudryavtseva:MatSb:1999} by E.~Kudryavtseva.
The case $\Psp=S^1$ was proven by the author in~\cite{Maks:InstMath:1998}.
Moreover, in~\cite{Maks:CMH:2005} Theorem~\ref{th:pi0_FMPl} was reproved by another methods.

The present notes establishes the following result:
\begin{theorem}\label{th:pi0_FMSc}
Suppose $\Mman$ is orientable.
Let $\func,\gfunc\in\FMSc$.
Then they belong to the same path component of $\FMSc$ iff the following three conditions hold true:
\begin{enumerate}
 \item[\rm(i)]
$\func$ and $\gfunc$ are homotopic as continuous maps;
 \item[\rm(ii)]
$c_i(\func)=c_i(\gfunc)$ for $i=0,1,2$;
 \item[\rm(iii)]
$\eps_{\Vman}(\func)=\eps_{\Vman}(\gfunc)$ for every connected component $\Vman$ of $\partial\Mman$ such that $\func|_{\Vman}$ is a constant map.
\end{enumerate}
\end{theorem}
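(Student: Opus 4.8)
I would first dispose of necessity, which is immediate. Condition~(i) is a homotopy invariant. Along a path $\func_t$ in $\FMSc$ the critical points move continuously and stay non-degenerate, so $c_i(\func_t)$ is independent of $t$, giving~(ii). For a component $\Vman$ of $\partial\Mman$ the integer $\deg(\func_t|_\Vman)$ is continuous, hence constant; since a covering of a circle has non-zero degree while a constant map has degree $0$, the restriction $\func_t|_\Vman$ is constant for all $t$ as soon as it is for one $t$, and then $\eps_\Vman(\func_t)\in\{-1,+1\}$, being continuous in $t$, is constant, giving~(iii). (Also, by~(i) the restrictions $\func|_\Vman$, $\gfunc|_\Vman$ are homotopic, hence of equal degree, so $\func|_\Vman$ is constant iff $\gfunc|_\Vman$ is; thus (iii) is well posed.)

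For sufficiency the plan is to reduce to Theorem~\ref{th:pi0_FMPl} by an unfolding (``doubling'') construction. Write $\partial\Mman=\partial_c\Mman\sqcup\partial_{cov}\Mman$, where $\func$ (equivalently $\gfunc$) is constant on the components of $\partial_c\Mman$ and a covering map on those of $\partial_{cov}\Mman$; if $\partial_{cov}\Mman=\varnothing$ we are already in $\FMSl$ and apply Theorem~\ref{th:pi0_FMPl} directly, so assume $\partial_{cov}\Mman\neq\varnothing$. Near each component $\Vman$ of $\partial_{cov}\Mman$ the map $\func$ is a submersion and $\Vman$ is transverse to the level curves; flowing $\Vman$ inward along a vector field tangent to the level curves gives a collar $\Vman\times[0,\eps)$ on which $\func$ does not depend on the collar coordinate, and the corresponding diffeomorphism of $\Mman$ is isotopic to the identity, so this standardization of $\func$ (and of $\gfunc$) is realized by a path in $\FMSc$. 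Now let $\Mman'=\Mman\cup_{\partial_{cov}\Mman}\overline{\Mman}$ be the double along $\partial_{cov}\Mman$, with $\overline{\Mman}$ an oppositely oriented copy of $\Mman$ and $\mu\colon\overline{\Mman}\to\Mman$ the canonical identification, and set $\func'=\func$ on $\Mman$, $\func'=\func\circ\mu$ on $\overline{\Mman}$. Because $\func$ was made independent of the collar coordinate near $\partial_{cov}\Mman$, the map $\func'$ is a smooth Morse function on the orientable compact surface $\Mman'$ with no critical points on $\partial_{cov}\Mman$ and locally constant on $\partial\Mman'=\partial_c\Mman\sqcup\partial_c\overline{\Mman}$; thus $\func',\gfunc'\in\FF_{l.c.}(\Mman',S^1)$. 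Since $\func'=\func\circ r$ and $\gfunc'=\gfunc\circ r$ for the folding map $r\colon\Mman'\to\Mman$ (identity on $\Mman$, equal to $\mu$ on $\overline{\Mman}$) and $\func\simeq\gfunc$, we get $\func'\simeq\gfunc'$; moreover $c_i(\func')=2c_i(\func)=2c_i(\gfunc)=c_i(\gfunc')$ and $\eps_\Vman(\func')=\eps_{r(\Vman)}(\func)=\eps_{r(\Vman)}(\gfunc)=\eps_\Vman(\gfunc')$ for every component $\Vman$ of $\partial\Mman'$. By Theorem~\ref{th:pi0_FMPl}, $\func'$ and $\gfunc'$ lie in the same path component of $\FF_{l.c.}(\Mman',S^1)$.

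The key remaining point is to arrange a path $\func'_t$ between $\func'$ and $\gfunc'$ that is \emph{invariant} under the canonical involution $\tau$ of $\Mman'$ interchanging the two copies of $\Mman$, so that $\mathrm{Fix}(\tau)=\partial_{cov}\Mman$ and $\func'\circ\tau=\func'$, $\gfunc'\circ\tau=\gfunc'$ (via the retraction $r$ they are even $\tau$-equivariantly homotopic). Granting such a path, its restriction $\func'_t|_\Mman$ is the path required in the theorem. Indeed, for a $\tau$-invariant Morse function the differential $d\func'_t$ is $\tau$-invariant, hence vanishes in the direction normal to $\mathrm{Fix}(\tau)$ (there $d\tau$ acts by $-1$), so at any non-critical point of $\mathrm{Fix}(\tau)$ the restriction $\func'_t|_{\mathrm{Fix}(\tau)}$ is a submersion onto $S^1$, i.e.\ a covering. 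Moreover, along a $\tau$-invariant deformation through Morse functions the number of critical points lying on $\mathrm{Fix}(\tau)$ cannot change, since such a point could reach $\mathrm{Fix}(\tau)$ only together with its $\tau$-image, forcing two non-degenerate critical points to collide, which is impossible along a path of Morse functions. As $\func'$ has no critical points on $\partial_{cov}\Mman$ by construction, neither does any $\func'_t$; hence $\func'_t|_\Mman$ has no critical points on $\partial\Mman$, restricts to a covering on each component of $\partial_{cov}\Mman$ and to a locally constant map on $\partial_c\Mman$, i.e.\ $\func'_t|_\Mman\in\FMSc$, and it joins $\func$ to $\gfunc$.

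Thus the theorem reduces to an equivariant refinement of Theorem~\ref{th:pi0_FMPl}: two $\tau$-invariant functions in $\FF_{l.c.}(\Mman',S^1)$ without critical points on $\mathrm{Fix}(\tau)$ that are $\tau$-equivariantly homotopic and have equal invariants $c_i$ and $\eps_\Vman$ can be joined by a $\tau$-invariant path. I expect this to be the main obstacle. The plan is to revisit the proof of Theorem~\ref{th:pi0_FMPl}, most conveniently the Kronrod--Reeb graph argument of~\cite{Maks:CMH:2005}: $\tau$ acts on $\KRf$ fixing pointwise the cycles that are the images of the standardized collars of $\partial_{cov}\Mman$, each elementary deformation used in that proof can be performed simultaneously with its $\tau$-image and can be taken supported away from a fixed standard collar of $\mathrm{Fix}(\tau)$, and the ambient diffeomorphisms isotopic to the identity can be chosen $\tau$-equivariantly. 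Carrying out this bookkeeping---keeping the combinatorial normal form compatible with the $\tau$-action near the fixed cycles---is the technical heart of the argument.
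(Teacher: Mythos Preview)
Your necessity argument is fine and matches the paper's ``obvious'' remark.

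For sufficiency, your approach is genuinely different from the paper's, but the reduction you propose is essentially circular. The paper cuts $\Mman$ along a non-exceptional level set $\func^{-1}(v)$ to obtain a surface \emph{with corners} carrying an induced map to $[0,1]$; the covering boundary components of $\Mman$ become the corner structure, and one then proves an $\RRR$-valued analogue (Theorem~\ref{th:pi0_FMRxi}) by KR-graph surgeries adapted to corners. The implications $(P_n)\Rightarrow(P_0)\Rightarrow(Q)\Rightarrow(R)$ then transfer this back to $\FMSc$.

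Your doubling, by contrast, does not reduce the difficulty. Observe that the folding map $r\colon\Mman'\to\Mman$ identifies $\Mman$ with the quotient $\Mman'/\tau$, and under this identification a $\tau$-invariant Morse map in $\FF_{l.c.}(\Mman',S^1)$ without critical points on $\mathrm{Fix}(\tau)$ restricts exactly to an element of $\FMSc$; conversely (after your collar normalization) every element of $\FMSc$ lifts. Hence the ``equivariant refinement of Theorem~\ref{th:pi0_FMPl}'' that you isolate as the main obstacle is, up to the harmless collar normalization, a restatement of Theorem~\ref{th:pi0_FMSc} itself. Invoking Theorem~\ref{th:pi0_FMPl} as a black box buys nothing: the non-equivariant path it produces between $\func'$ and $\gfunc'$ need not be, and generically will not be, $\tau$-invariant, and there is no averaging procedure that stays inside Morse maps.

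Your final paragraph concedes this and proposes to redo the KR-graph proof of Theorem~\ref{th:pi0_FMPl} $\tau$-equivariantly. That program is plausible, but carrying it out means tracking the involution through canonical forms, elementary surgeries, and the diffeomorphism step---which is exactly the work the paper does by passing to surfaces with corners (the corners encode what your $\tau$-symmetry encodes). So the doubling is a reformulation rather than a reduction; the substantive argument you still owe is equivalent in content to the paper's Theorem~\ref{th:pi0_FMRxi}, and the paper's cutting approach has the advantage of landing in $\RRR$-valued functions, where the KR-graph combinatorics are cleaner.
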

Notice that the formulations of both Theorems~\ref{th:pi0_FMPl} and~\ref{th:pi0_FMSc} look the same.
The difference is that in Theorem~\ref{th:pi0_FMPl} every $\func\in\FMPl$ takes constant values of connected components of $\partial\Vman$, while in Theorem~\ref{th:pi0_FMSc} the restrictions of $\func\in\FMSc$ to boundary components $\Vman$ of $\Mman$ may also be covering maps and the degrees of such restrictions $\func:\Vman\to S^1$ are encoded by homotopy condition (i).

I was asked about the problem of connected components of the space $\FMSc$ by A.~Pajitnov.

The proof of Theorem~\ref{th:pi0_FMSc} follows the line of~\cite{Maks:InstMath:1998, Maks:CMH:2005}.
First we prove $\RRR$-variant of Theorem~\ref{th:pi0_FMSc} similarly to~\cite{Maks:CMH:2005}, see Theorem~\ref{th:pi0_FMRxi} below, and then deduce Theorem~\ref{th:pi0_FMSc} from Theorem~\ref{th:pi0_FMRxi} similarly to~\cite{Maks:InstMath:1998}.
Therefore we mostly sketch the proofs indicating only the principal differences.

\section{$\RRR$-variant of Theorem~\ref{th:pi0_FMSc} for surfaces with corners}
Let $\func\in\FMSc$.
Say that $v\in S^1$ is an \emph{exceptional} value of $\func$, if $v$ is either critical value of $\func$ or there exists a connected component $\Vman$ of $\partial\Mman$ such that $\func(\Vman)=v$.

Let $v\in S^1$ be a non-exceptional value of $\func$.
Then its inverse image $\func^{-1}(v)$ is a proper $1$-submanifold of $\Mman$ which does not contain connected components of $\partial\Mman$.
Thus $\func^{-1}(v)$ is a disjoint union of circles and arcs with ends on $\partial\Mman$ and transversal to $\partial\Mman$ at these points.
Let $\hMman$ be a surface obtained by cutting $\Mman$ along $\func^{-1}(v)$.

Then $\hMman$ can be regarded as a surface with corners and $\func$ induces a function $\hfunc:\hMman\to[0,1]$ such that
\begin{enumerate}
 \item[(a)]
$\hfunc|_{\Int{\hMman}}$ is Morse and has no critical points on $\partial\hMman$;
 \item[(b)]
let $\Wman$ be a connected component of $\partial\hMman$;
then either $\hfunc|_{\Wman}$ is constant, or there are $4k_{\Wman}$ points on $\Wman$ for some $k_{\Wman}\geq1$ dividing $\Wman$ into $4k_{\Wman}$ arcs
$$ 
A_1, B_1, C_1, D_1, \ldots, A_{k_{\Wman}}, B_{k_{\Wman}}, C_{k_{\Wman}}, D_{k_{\Wman}}
$$ 
such that $\hfunc$ strictly decreases on $A_i$, strictly increases on $C_i$, $\hfunc(B_i)=1$, and $\hfunc(D_i)=0$ for each $i=1,\ldots,k_{\Wman}$.
\end{enumerate}

We will now define the space of all such functions and describe its connected components.

\subsection{Space $\FMRxi$.}
Let $\Mman$ be a compact, possibly non-connected, surface.
For every connected component $\Wman$ of $\partial\Mman$ fix an orientation and a number $k_{\Vman}\geq0$, and divide $\Wman$ into $4k_{\Vman}$ consecutive arcs
$$
A_1, B_1, C_1, D_1, \ldots, A_{k_{\Wman}}, B_{k_{\Wman}}, C_{k_{\Wman}}, D_{k_{\Wman}}
$$
directed along orientation of $\Wman$.
If $k_{\Vman}=0$ then we do not divide $\Wman$ at all.

Denote this subdivision of $\partial\Mman$ by $\xi$ and the set of ends of these arks by $K=K(\xi)$.
We will regard $K$ as ``corners'' of $\Mman$.

Let also $T_{+}$ (resp. $T_{1}$, $T_{-}$, and $T_{0}$) the union of all closed arcs $A_i$ (resp. $B_i$, $C_i$, $D_i$) over all boundary components of $\Mman$.

Let $\FMRxi$ be the space of all continuous functions $\func:\Mman\to I=[0,1]$ satisfying the following three conditions.
\begin{enumerate}
 \item[(a)]
The restriction of $\func$ to $\Mman\setminus K$ is $\Cinf$,
and all partial derivatives of $\func$ of all orders continuously extends to all of $\Mman$.
 \item[(b)]
All critical points of $\func$ are non-degenerate and belong to $\Int{\Mman}$,
$$
\func(\Int{\Mman})\subset(0,1),
\quad
\func^{-1}(0)=T_{0},
\quad
\func^{-1}(1)=T_{1},
$$
and $\func|_{T_{+}}$ (resp. $\func|_{T_{-}}$) has strictly positive (resp. negative) derivative.
 \item[(c)]
Let $\Wman$ be a connected component of $\partial\Mman$ such that $k_{\Wman}=0$.
Then $\func|_{\Wman}$ is constant and $\hfunc(\Wman)\in(0,1)$.
\end{enumerate}

Notice that condition (a) means that $\func$ is a $\Cinf$-function on a surface with corners and condition (b) implies that $\func$ strictly increases (decreases) on each arc $A_i$ ($C_i$),

Again we associate to every $\func\in\FMRxi$ the total number $c_i(\func)$ of critical points at each index $i=0,1,2$.
Moreover, to every connected component $\Wman$ of $\partial\Mman$ with $k_{\Wman}=0$ we associate the number $\eps_{\Wman}(\func)$ as above.

The following theorem extends $\RRR$-case of Theorem\;\ref{th:pi0_FMPl} to orientable surfaces with corners.
\begin{theorem}\label{th:pi0_FMRxi}
Suppose $\Mman$ is orientable and connected.
Then $\func,\gfunc\in\FMRxi$ belongs to the same path component of $\FMRxi$ iff 
\begin{enumerate}
 \item[\rm(i)]
$c_i(\func)=c_i(\gfunc)$ for $i=0,1,2$;
 \item[\rm(ii)]
$\eps_{\Wman}(\func)=\eps_{\Wman}(\gfunc)$ for every connected component $\Wman$ of $\partial\Mman$ with $k_{\Wman}=0$.
\end{enumerate}
Moreover, if $\func=\gfunc$ on some neighbourhood of $T_{0}\cup T_{1}$, then there exists a homotopy relatively $T_{0}\cup T_{1}$ between these functions in $\FMRxi$.
\end{theorem}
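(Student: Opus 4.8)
The plan is to follow the two-step scheme of \cite{Maks:InstMath:1998, Maks:CMH:2005}. Necessity of (i)--(ii) is the easy direction: for each $i$ the map $\func\mapsto c_i(\func)$ is locally constant on $\FMRxi$, since a $\Cinf$-small perturbation of a Morse function keeps all critical points non-degenerate and, as they are confined to the compact set $\Int\Mman$, none is created, destroyed or pushed onto $\partial\Mman$, while indices are preserved; likewise, for $\Wman$ with $k_{\Wman}=0$ the value $\func(\Wman)\in(0,1)$ varies continuously whereas being a local extremum is an open condition, so $\eps_{\Wman}$ is locally constant. Thus (i)--(ii) are invariants of path components, and the whole work is to prove sufficiency by joining $\func$ to $\gfunc$ inside $\FMRxi$.

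\textbf{Step 1 (normalization).} First I would connect $\func$, by a path in $\FMRxi$ which --- under the relative hypothesis --- is kept equal to the corresponding deformation of $\gfunc$ near $T_0\cup T_1$, to a \emph{simple} function: one whose critical values are pairwise distinct and distinct from the values taken on the $k_{\Wman}=0$ components, carrying a standard Morse chart near each critical point and a standard collar near each of the arcs $A_i,B_i,C_i,D_i$ and each $k_{\Wman}=0$ component, and restricting between two consecutive critical levels to a trivial fibration over an interval. Each such move is a parametrized isotopy-extension argument combined with the contractibility of the space of local models, carried out away from $T_0\cup T_1$; the only point requiring care beyond \cite{Maks:CMH:2005} is the surface-with-corners bookkeeping.

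\textbf{Step 2 (equalizing Kronrod--Reeb graphs).} A simple $\func$ determines its decorated graph $\KRf$, whose vertices are the critical points and the $k_{\Wman}=0$ components, whose edges are the components of regular level sets, and whose decoration records indices, the signs $\eps_{\Wman}$, and which arcs of $\xi$ accompany which edge. The heart of the argument is to show that, on a fixed orientable $\Mman$, any two such decorated graphs carrying the same data (i)--(ii) are joined by a finite chain of elementary moves --- transposing two independent critical values, commuting an edge past a vertex, and a handle-swap at a saddle vertex --- each realized by an explicit path in $\FMRxi$. Orientability enters decisively here: it excludes one-sided local pictures at saddles, so the move list is complete, and it makes the Morse-type relation between $c_0-c_1+c_2$, $\chi(\Mman)$ and the combinatorics of $\xi$ rigid enough that equal $(c_i)$ forces the two graphs to be move-equivalent. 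I expect this combinatorial step, together with its interplay with the corners, to be the main obstacle.

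\textbf{Step 3 (same graph; induction on $c_0+c_1+c_2$).} Once $\KRf=\KRg$, I would induct on the number of critical points. In the base case $c_0+c_1+c_2=0$ one must show that a suitable space of fibrewise-monotone functions without critical points on a surface with corners is path connected relative to $T_0\cup T_1$; up to the obvious product structure such functions form a convex family, so this is immediate. For the inductive step, pick a critical point of $\func$ of extremal critical value --- say an index-$0$ point, the other cases being symmetric or reducible to a saddle --- enclosed by a single regular level component in a disc; deform $\func$ to the standard model on this disc and, using $c_0(\func)=c_0(\gfunc)$ together with $\KRf=\KRg$, arrange the identical disc for $\gfunc$; excising it and gluing in a monotone collar yields $\func',\gfunc'\in\FMprRxi$ on a surface $\Mman'$ with one fewer critical point, a modified subdivision $\xi'$ and still-equal invariants, so the inductive hypothesis gives a path from $\func'$ to $\gfunc'$; re-inserting the disc along a parametrized version of this path connects $\func$ to $\gfunc$.
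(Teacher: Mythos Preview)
Your outline tracks the paper's scheme through Steps~1--2: both arguments normalize to a generic (``canonical'') form and then connect the decorated Kronrod--Reeb graphs by a finite list of elementary surgeries, each realized as a path in $\FMRxi$. The divergence is in the endgame. After the KR-graphs have been matched, the paper does \emph{not} induct on the number of critical points. Instead it invokes a realization lemma (Lemma~\ref{lm:KR-equivalence-of-KR-functions}, in the spirit of Kulinich and Bolsinov--Fomenko): equivalent KR-functions force $\gfunc=\phi^{-1}\circ\func\circ\dif$ for a diffeomorphism $\dif$ of $\Mman$, which one then arranges to be fixed near $\partial\Mman$. The remaining task is to show $\func\circ\dif\sim\func$, and here the paper's key move (Lemma~\ref{lm:f_is_homotopic_to_fh}) is to observe that each $X$-block of the canonical $\KRf$ is precisely the KR-graph of a collar of one corner-boundary component $\Wman$; cutting those collars off reduces to a surface on which $\func$ is locally constant on the boundary --- i.e., to the already-proven Theorem~\ref{th:pi0_FMPl}. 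This buys the entire surface-with-corners statement essentially for free from the classical case.

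Your Step~3 instead re-derives this last piece by excising critical discs and inducting. That route is plausible, but the base case as written is not correct: two critical-point-free functions in $\FMRxi$ are fibrations over $[0,1]$, yet their affine combination $t\func+(1-t)\gfunc$ can acquire critical points unless $\func$ and $\gfunc$ already share level sets, so ``convex family'' fails. What actually makes the base case work is that two such fibrations on an orientable surface with corners differ by an ambient isotopy fixed near $T_0\cup T_1$, and composing $\gfunc$ with that isotopy is a path in $\FMRxi$ --- but this is exactly the content of Lemma~\ref{lm:KR-equivalence-of-KR-functions} together with Lemma~\ref{lm:f_is_homotopic_to_fh} in the critical-point-free case, which brings you back to the paper's route. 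Similarly, ``arrange the identical disc for $\gfunc$'' in your inductive step hides a local ambient-isotopy argument (why can the disc for $\gfunc$ be moved onto the disc for $\func$ while staying in $\FMRxi$?) that needs to be made explicit. None of this is fatal, but the paper's reduction to Theorem~\ref{th:pi0_FMPl} is both shorter and avoids these issues.
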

The proof will be given in~\S\ref{sect:proof_th:pi0_FMSxi}.
Now we will deduce from this result Theorem~\ref{th:pi0_FMSc}.

\section{Proof of Theorem~\ref{th:pi0_FMSc}}
Necessity is obvious, therefore we will prove only sufficiency.

Let $\func,\gfunc\in\FMSc$.
Consider the following conditions $(Q_n)$, $n\geq0$, and $(P)$ for $\func$ and $\gfunc$.

\begin{enumerate}
 \item[$(P_n)$]
$\func$ (resp. $\gfunc$) is homotopic in $\FMSc$ to a map $\tilde\func$ (resp. $\tilde\gfunc$) such that for some common non-exceptional value $v\in S^1$ of $\tilde\func$ and $\tilde\gfunc$ the intersection $\tilde\func^{-1}(v) \cap \tilde\gfunc^{-1}(v)$ is transversal and consists of at most $n$ points.
 \item[$(Q)$]
$\func$ (resp. $\gfunc$) is homotopic in $\FMSc$ to a map $\tilde\func$ (resp. $\tilde\gfunc$) such that for some common non-exceptional value $v\in S^1$ of $\tilde\func$ and $\tilde\gfunc$, 
\begin{itemize}
 \item[(i)]
$\tilde\func^{-1}(v)=\tilde\gfunc^{-1}(v)$, 
 \item[(ii)]
$\tilde\func=\tilde\gfunc$ on some neighbourhood of $\tilde\func^{-1}(v)$, 
 \item[(iii)]
and for every connected component $\Mman_1$ of $\Mman\setminus\tilde\func^{-1}(v)$ the restrictions $\tilde\func$ and $\tilde\gfunc$ onto $\Mman_1$ have the same numbers of critical points at each index.
\end{itemize}
\item[$(R)$]
$\func$ is homotopic to $\gfunc$ in $\FMSc$.
\end{enumerate}

Notice that $\func$ and $\gfunc$ always satisfy $(P_n)$ for some $n\geq0$.
We have to prove for them condition $(R)$.
This is given by the following lemma, which completes the proof of Theorem~\ref{th:pi0_FMSc}.
\begin{lemma}\label{lm:deduce_pi0_FMSc_from_FMSxi}
Let $\func,\gfunc\in\FMSc$.
Suppose that $\func,\gfunc\in\FMSc$ satisfy conditions {\rm(i)-(iii)} of Theorem\;\ref{th:pi0_FMSc}.
Then the following implications hold:
$$
(P_n) \Rightarrow (P_{n-1}) \Rightarrow \cdots \Rightarrow (P_0) \Rightarrow (Q) \Rightarrow (R).
$$
\end{lemma}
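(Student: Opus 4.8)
The plan is to establish the four implications of the displayed chain in turn; the two carrying the real content are $(P_n)\Rightarrow(P_{n-1})$ and $(Q)\Rightarrow(R)$. A feature to keep in mind throughout is that a path in $\FMSc$ preserves non-degeneracy of every critical point, so no critical point is ever born or dies along the way; hence all the modifications below must be carried out without introducing critical points, and the invariants $c_i$ will be matched only at the very end, and componentwise once we reach $(Q)\Rightarrow(R)$.

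For $(P_n)\Rightarrow(P_{n-1})$ with $n\geq1$, take $\tilde\func,\tilde\gfunc$ and the common non-exceptional value $v$ as in $(P_n)$ and set $L_{\func}=\tilde\func^{-1}(v)$, $L_{\gfunc}=\tilde\gfunc^{-1}(v)$ --- transverse proper $1$-submanifolds of $\Mman$ avoiding the constant boundary components, with $|L_{\func}\cap L_{\gfunc}|\leq n$; if the intersection is empty there is nothing to do, so assume it is not. Since $\tilde\func\simeq\tilde\gfunc$ by condition (i), the classes of $L_{\func}$ and $L_{\gfunc}$ in $H_1(\Mman,\partial\Mman;\ZZZ)$ agree, being Poincar\'e--Lefschetz dual to $\tilde\func^{*}[S^1]=\tilde\gfunc^{*}[S^1]$. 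I would then, after a preliminary homotopy of $\tilde\gfunc$ in $\FMSc$ used to dispose of inessential components of $L_{\gfunc}$, produce by an innermost/outermost argument an arc $\alpha\subset L_{\gfunc}$ meeting $L_{\func}$ exactly at its two endpoints (consecutive points of $L_{\func}\cap L_{\gfunc}$ along $\tilde\gfunc^{-1}(v)$) together with a subarc $\beta\subset L_{\func}$ with the same endpoints, so that $\alpha\cup\beta$ bounds a disk $D\subset\Mman$ --- or a half-disk with one side on $\partial\Mman$ --- whose interior avoids $L_{\func}$; then, using that $\tilde\func$ is a coordinate projection on a collar $L_{\func}\times[-\eps,\eps]$ and has no nearby critical points, a homotopy of $\tilde\gfunc$ supported near $D$ drags $\alpha$ across $\beta$ and off $L_{\func}$, removing at least one point of $L_{\func}\cap L_{\gfunc}$ and yielding $(P_{n-1})$. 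The main obstacle is to keep this last homotopy inside $\FMSc$: it must create no critical points, leave $\tilde\func$ and the level $L_{\func}$ untouched, and --- when $D$ abuts $\partial\Mman$ --- respect condition (2$'$), i.e.\ keep $\tilde\gfunc$ a covering or a constant on every boundary component it meets. Handling the covering components is precisely the new difficulty here compared with the locally constant case of \cite{Maks:InstMath:1998, Maks:CMH:2005}, and I expect to deal with it by performing the drag through the collar $\tilde\func^{-1}([v,v+\eps])$, where $\tilde\func$ is a coordinate and the boundary is a product, before bringing $\alpha$ back.

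For $(P_0)\Rightarrow(Q)$ the level sets $L_{\func}$ and $L_{\gfunc}$ are disjoint. I would choose $\eps$ so small that the collar $\tilde\func^{-1}([v-\eps,v+\eps])$ misses $L_{\gfunc}$, and then, using that $L_{\func}$ and $L_{\gfunc}$ are disjoint and homologous rel $\partial\Mman$, use the subsurface they cobound to isotope $\Mman$ --- equivalently to homotope $\tilde\gfunc$ in $\FMSc$ --- so that afterwards $\tilde\gfunc^{-1}(v)=L_{\func}$, followed by one more homotopy of $\tilde\gfunc$ on a collar of $L_{\func}$ achieving $\tilde\func=\tilde\gfunc$ near $L_{\func}$; this gives (i) and (ii) of $(Q)$. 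For (iii) of $(Q)$ the numbers of critical points of $\tilde\func$ and of $\tilde\gfunc$ at each index must be matched on every component $\Mman_1$ of $\Mman\setminus L_{\func}$: the global totals already agree by (ii) of Theorem~\ref{th:pi0_FMSc}, and since $\tilde\func\simeq\tilde\gfunc$ the pieces on which they still differ are compatible, so any remaining discrepancy should be removed by finitely many moves, each dragging one critical point of $\tilde\gfunc$ together with its local handle across $L_{\func}$ into a neighbouring component --- a homotopy in $\FMSc$ that deforms $\tilde\gfunc^{-1}(v)$ only temporarily, which is allowed since $(Q)$ need hold only at the end.

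Finally, for $(Q)\Rightarrow(R)$, cut $\Mman$ along $L:=\tilde\func^{-1}(v)=\tilde\gfunc^{-1}(v)$. By the construction of \S2 this produces a compact orientable surface with corners $\hMman$, possibly disconnected, a subdivision $\xi$ of $\partial\hMman$, and induced functions $\hfunc,\hgfunc$ whose restrictions to each connected component of $\hMman$ lie in the corresponding space $\FMRxi$, the $k_{\Wman}=0$ boundary components of $\hMman$ being exactly the components of $\partial\Mman$ on which $\tilde\func$ (hence $\tilde\gfunc$) is constant. By (iii) of $(Q)$ the functions $\hfunc$ and $\hgfunc$ have equal numbers of critical points of each index on every component of $\hMman$; by (iii) of Theorem~\ref{th:pi0_FMSc} they have equal signs $\eps_{\Wman}$ on every $k_{\Wman}=0$ component; and by (ii) of $(Q)$ they agree on a neighbourhood of $T_0\cup T_1$. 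Applying Theorem~\ref{th:pi0_FMRxi} to each component of $\hMman$ produces a homotopy from $\hfunc$ to $\hgfunc$ in $\FMRxi$ relative to $T_0\cup T_1$; being constant near the cutting locus $L$, this homotopy glues up to a homotopy from $\tilde\func$ to $\tilde\gfunc$ within $\FMSc$, and composing with the homotopies supplied by $(Q)$ shows that $\func$ and $\gfunc$ lie in the same path component of $\FMSc$, which is $(R)$.
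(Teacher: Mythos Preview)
Your treatment of $(Q)\Rightarrow(R)$ is correct and matches the paper exactly: cut along the common regular level, apply Theorem~\ref{th:pi0_FMRxi} componentwise relative to $T_0\cup T_1$, and reglue.

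For $(P_n)\Rightarrow(P_{n-1})$ and $(P_0)\Rightarrow(Q)$, however, you take a genuinely different route from the paper, and there are real gaps. The paper does \emph{not} argue directly with the curves $L_{\func},L_{\gfunc}$; it says these two implications are deduced from Theorem~\ref{th:pi0_FMRxi} (the $\RRR$-variant on surfaces with corners), following the scheme of \cite{Maks:InstMath:1998}: one cuts $\Mman$ open along one of the level sets and then uses the full freedom of Theorem~\ref{th:pi0_FMRxi} on the resulting pieces --- i.e.\ one is allowed to deform the Morse function itself, not merely to isotope its level curve. Your arguments, by contrast, try to work purely at the level of the $1$-submanifolds $L_{\func}$ and $L_{\gfunc}$, and this is where they break.

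Concretely: in $(P_n)\Rightarrow(P_{n-1})$ you assert that an innermost/outermost argument produces arcs $\alpha\subset L_{\gfunc}$ and $\beta\subset L_{\func}$ with $\alpha\cup\beta$ bounding a disk (or half-disk). But $L_{\func}$ and $L_{\gfunc}$ are only \emph{homologous} multicurves, not isotopic, and on a surface of positive genus two homologous multicurves can be in minimal position with no bigon at all, even though their algebraic intersection number is zero. Likewise, in $(P_0)\Rightarrow(Q)$ you write that the disjoint homologous multicurves $L_{\func}$ and $L_{\gfunc}$ ``cobound a subsurface'' which you then use to \emph{isotope} one to the other; but disjoint homologous multicurves need not be ambient-isotopic (already for simple closed curves on a genus-two surface there are disjoint, homologous, non-isotopic examples), so no such isotopy exists in general. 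In both steps the missing ingredient is precisely the ability to change the \emph{topology} of $\tilde\gfunc^{-1}(v)$ --- by letting critical values of $\tilde\gfunc$ cross $v$, say --- and that is exactly what invoking Theorem~\ref{th:pi0_FMRxi} on the cut-open surface provides. Once you route the argument through Theorem~\ref{th:pi0_FMRxi} as the paper does, these obstructions disappear; your direct curve-level reductions, as written, do not go through.
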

\begin{proof}
Implications $(P_n) \Rightarrow (P_{n-1})$ and $(P_0) \Rightarrow (Q)$ can be deduced from Theorem~\ref{th:pi0_FMRxi} almost by the same arguments as \cite[Theorems~3\&4]{Maks:InstMath:1998} were deduced from the $\RRR$-case of Theorem~\ref{th:pi0_FMPl}.
The principal difference here is that one should work with $1$-submanifolds with boundary rather than with closed $1$-submanifolds.
The proof is left for the reader.

$(Q) \Rightarrow (R)$.
Cut $\Mman$ along $\func^{-1}(v)$ and denote the obtained surface with corners by $\hMman$.
Then $\func$ (resp. $\gfunc$) induces on $\hMman$ a function $\hfunc$ (resp. $\hgfunc$) belonging to $\FMprRxi$.
Moreover, it follows from conditions (i)-(iii) of Theorem~\ref{th:pi0_FMPl} for $\func$ and $\gfunc$ and assumption (iii) of $(Q)$ that for every connected component $\Mman_1$ of $\hMman$, the restrictions of $\hfunc$ and $\hgfunc$ to $\Mman_1$ satisfy conditions (i) and (ii) of Theorem~\ref{th:pi0_FMRxi}.
Hence they are homotopic in $\FMprRxi$ relatively some neighbourhood of the set $T_0\cup T_1$ corresponding to $\func^{-1}(v)$.
This homotopy yields a desired homotopy between $\func$ and $\gfunc$ in $\FMSc$.
\end{proof}

\section{Proof of Theorem~\ref{th:pi0_FMRxi}}\label{sect:proof_th:pi0_FMSxi}
We will follow the line of the proof of Theorem~\ref{th:pi0_FMPl}, see~\cite{Kudryavtseva:MatSb:1999,Maks:CMH:2005}.
Suppose $\func,\gfunc\in\FMRxi$ satisfy assumptions (i) and (ii) of Theorem~\ref{th:pi0_FMRxi}.
The idea is to reduce the situation to the case when $\gfunc=\func\circ\dif$ for some diffeomorphism $\dif$ of $\Mman$ fixed near $\partial\Mman$, and then show that $\func\circ\dif$ is homotopic in $\FMRxi$ to $\func$, see Lemmas~\ref{lm:f_is_homotopic_to_a_canonical_func}-\ref{lm:f_is_homotopic_to_fh}.
\subsection{KR-graph}
For $\func\in\FMRxi$ define the \emph{Kronrod-Reeb} graph (or simply \emph{KR-graph}) $\KRf$ of $\func$ as a topological space obtained by shrinking to a point every connected component of $\func^{-1}(v)$ for each $v\in I$.
It easily follows from the assumptions on $\func$ that $\KRf$ has a natural structure of a $1$-dimensional CW-complex.
The vertices of $\func$ corresponds to the connected components of level sets $\func^{-1}(v)$ containing critical points of $\func$.

Notice that $\func$ can be represented as the following composite of maps:
$$
\func: \func_{KR} \circ p_{\func}: 
\Mman \xrightarrow{~p_{\func}~} \KRf \xrightarrow{~\func_{KR}~} I,
$$
where $p_{\func}$ is a factor map and $\func_{KR}$ is the induced function on $\KRf$ which we will call the \emph{KR-function} of $\func$.

Say that $\func$ is \emph{generic} if it takes distinct values at distinct critical points and connected components $\Wman$ of $\partial\Mman$ with $k_{\Wman}=0$.
It is easy to show that every $\func\in\FMRxi$ is homotopic in $\FMRxi$ to a generic function.

Notice that for each non-exceptional value $v$ of $\func$ every connected component $P$ of $\func^{-1}(v)$ is either an arc or a circle.
We will distinguish the corresponding points on $\KRf$ as follows: if $P$ is an arc, then we denote the corresponding point on $\KRf$ in bold.
Thus on the KR-graph of $\func$ we will have two types of edges \emph{bold} and \emph{thin}.

Moreover, every vertex $w$ of degree $1$ of $\KRf$ corresponds either to a local extreme of $\func$ or to a boundary component $\Wman$ of $\partial\Mman$ with $k_{\Wman}=0$.
In the first case $w$ will be called an \emph{$e$-vertex}, and a \emph{$\partial$-vertex} otherwise.
$\partial$-vertexes will be denoted in bold.

Possible types of vertexes of $\KRf$ corresponding to saddle critical points together with the corresponding critical level sets are shown in Figure~\ref{fig:vertexes_of_KRf}.
\begin{figure}[ht]
\centerline{
\begin{tabular}{ccccc}
\includegraphics[height=1.2cm]{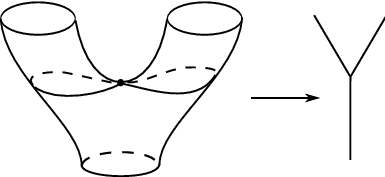} & \qquad\qquad &
\includegraphics[height=1.2cm]{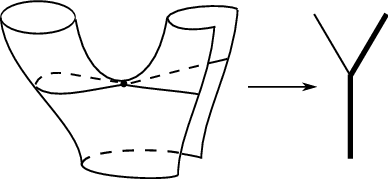} & \qquad\qquad &
\includegraphics[height=1.2cm]{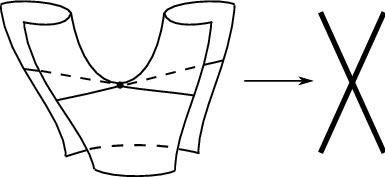} \\
a) & & b) & & c)
\end{tabular}
}
\protect\caption{}\label{fig:vertexes_of_KRf}
\end{figure}

\begin{definition}\label{defn:equivalence_of_KR_func}
Let $\func,\gfunc\in\FMRxi$.
Say that KR-functions of $\func$ and $\gfunc$ are {\bfseries equivalent} if there exist a homeomorphism $H:\KRf\to\KRg$ between their KR-graphs and a homeomorphism $\Phi:I\to I$ which preserves orientation such that $\gfunc_{KR}=\Phi^{-1}\circ\func_{KR}\circ H$ and $H$ maps bold edges (resp. thin edges, $\partial$-vertexes) of $\KRg$ to bold edges (resp. thin edges, $\partial$-vertexes) of $\KRf$.
\end{definition}

We will always draw a KR-graph so that the corresponding KR-function will be the projection to th evertical line.
This determines KR-function up to equivalence in the sense of Definition~\ref{defn:equivalence_of_KR_func}.

The following statement can be proved similarly to~\cite{Kulinich:MFAT:1998, BolsinovFomenko:1997}.
\begin{lemma}\label{lm:KR-equivalence-of-KR-functions}
Suppose $\Mman$ is orientable, and let $\func,\gfunc\in\FMRxi$ be two generic functions such that their KR-functions are KR-equivalent.
Then there exist a diffeomorphism $\dif:\Mman\to\Mman$ and a preserving orientation diffeomorphism $\phi:I\to I$ such that $\gfunc=\phi^{-1}\circ\func\circ\dif$.

Since $\phi$ is isotopic to $\id_{I}$, it follows that $\gfunc$ is homotopic in $\FMRxi$ to $\func\circ\dif$.
\end{lemma}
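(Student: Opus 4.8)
The plan is to follow the classical scheme of \cite{Kulinich:MFAT:1998, BolsinovFomenko:1997}: first reparametrize the target interval so that $\func$ and $\gfunc$ have literally the same critical and boundary values, and then build the diffeomorphism $\dif$ by sweeping $I$ from $0$ to $1$, assembling it out of elementary pieces whose combinatorics is prescribed by the KR-equivalence. For the first step, note that by genericity the critical values of $\func$ together with the values $\func(\Wman)$ over boundary components $\Wman$ with $k_{\Wman}=0$ form a finite set $v_1<\dots<v_m$ in $(0,1)$, and likewise $w_1<\dots<w_m$ for $\gfunc$; since $\gfunc_{KR}=\Phi^{-1}\circ\func_{KR}\circ H$, the homeomorphism $H:\KRg\to\KRf$ carries the vertex of $\KRg$ at height $w_i$ to the vertex of $\KRf$ at height $v_i$, hence $\Phi^{-1}(v_i)=w_i$. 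Choose an orientation preserving diffeomorphism $\phi:I\to I$ with $\phi(0)=0$, $\phi(1)=1$ and $\phi(w_i)=v_i$ for all $i$. Then one checks directly that $\phi^{-1}\circ\func\in\FMRxi$ (composing with $\phi^{-1}$ preserves non-degeneracy and indices of critical points, the conditions on $T_{0},T_{1},T_{+},T_{-}$, and smoothness up to the corners $K$), that it has the same level set components as $\func$, and that its critical and boundary values are exactly $w_1<\dots<w_m$, attained at the $H$-images of the corresponding vertices of $\KRg$. Replacing $\func$ by $\phi^{-1}\circ\func$ I may therefore assume that $\func$ and $\gfunc$ have the same critical and boundary values $a_1<\dots<a_m$ and that $H$ identifies the corresponding vertices of the two KR-graphs and matches bold edges, thin edges and $\partial$-vertexes.

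Next I would pick regular values $0=c_0<c_1<\dots<c_N=1$ so that each $[c_{j-1},c_j]$ contains at most one $a_i$, with the first and last intervals carrying collars of $T_{0}$ and of $T_{1}$. This cuts $\Mman$ into \emph{slabs} $\Mman_j=\func^{-1}([c_{j-1},c_j])$ and, likewise, $N_j=\gfunc^{-1}([c_{j-1},c_j])$. Each level set $\func^{-1}(c_j)$, $\gfunc^{-1}(c_j)$ is a disjoint union of circles and of arcs with endpoints on the vertical boundary arcs $A_i,C_i$ and transversal to $\partial\Mman$ there, and its components correspond to the edges of $\KRf$, resp. $\KRg$, crossed by the level $c_j$; since $H$ is a graph isomorphism respecting bold and thin edges, it matches these components for $\func$ with those for $\gfunc$. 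I would then construct $\dif$ inductively as diffeomorphisms $\dif_j:\gfunc^{-1}([0,c_j])\to\func^{-1}([0,c_j])$ with $\func\circ\dif_j=\gfunc$, preserving $\partial\Mman$, its subdivision $\xi$ and the corner set $K$, and agreeing on the top level $\gfunc^{-1}(c_j)$ with the matching of components determined by $H$. Over a regular slab $[c_{j-1},c_j]$ the inductive step is routine: fix gradient-like vector fields for $\func$ and for $\gfunc$ that are tangent to the arcs $A_i,C_i$, point outward along $B_i$, inward along $D_i$, and are adapted to a fixed standard model near each corner; their flows trivialize $\Mman_j$ and $N_j$ as products over the level $c_{j-1}$, and one extends $\dif_{j-1}$ as the product of $\dif_{j-1}|_{\gfunc^{-1}(c_{j-1})}$ with the identity. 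Over a slab containing one value $a_i$ the slab is obtained from a regular slab by attaching a single elementary handle: near an index $0,1,2$ critical point one uses a Morse chart, and near a boundary component with $k_{\Wman}=0$ one uses a standard half-extremum collar model. The shape of the handle and the way the incoming and outgoing level curves are glued are exactly encoded by the type of the corresponding vertex of $\KRf$, resp. $\KRg$ (the three possibilities in Figure~\ref{fig:vertexes_of_KRf}, plus the leaf types), and these types agree under $H$; so one writes the corresponding local diffeomorphism on the handle model and glues it with the product extension on the rest of the slab.

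The hypothesis that $\Mman$ is orientable enters precisely at the gluing stage. An orientation of $\Mman$ together with the ``upward'' orientation of $I$ co-orients every regular level curve; I would use these co-orientations to choose, coherently along both surfaces, the identifications of level curves produced by the flows and the handle models, and it is exactly this coherence that lets the handle charts glue with the product extensions across each critical level so that the induction goes through. On a non-orientable surface the KR-graph does not determine how the elementary handles are attached and the construction breaks down; this is why orientability is assumed, in agreement with \cite{Kulinich:MFAT:1998, BolsinovFomenko:1997}.

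I expect the real difficulty to be technical rather than conceptual: unlike in the closed case treated in the references, the level sets here carry arcs meeting $\partial\Mman$ and $\Mman$ is a surface with corners, so the whole construction has to be carried out compatibly with fixed standard models for the corner points of $\Mman$ and for the behaviour of $\func$ near $T_{0}\cup T_{1}$ — in particular the gradient-like fields must be tangent to the vertical arcs $A_i,C_i$ and the Morse and collar charts must be chosen to respect the corners. Granting this bookkeeping, after $N$ steps $\dif:=\dif_N$ is a diffeomorphism of $\Mman$ with $\func\circ\dif=\gfunc$, that is $\gfunc=\phi^{-1}\circ\func\circ\dif$. Finally, since $\phi$ fixes $0$ and $1$ it is isotopic to $\id_I$ through orientation preserving diffeomorphisms $\phi_s$ of $I$ fixing $\{0,1\}$; then $s\mapsto\phi_s\circ\gfunc$ is a path in $\FMRxi$ from $\gfunc$ to $\phi\circ\gfunc=\func\circ\dif$, which gives the asserted homotopy.
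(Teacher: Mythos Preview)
Your sketch is correct and is precisely the approach the paper has in mind: the paper gives no argument beyond the sentence ``can be proved similarly to~\cite{Kulinich:MFAT:1998, BolsinovFomenko:1997}'', and what you wrote is exactly the adaptation of that classical level-by-level construction to the present setting of surfaces with corners and level arcs. The only thing to add is that your write-up already contains more detail than the paper itself supplies.
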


\subsection{Canonical KR-graph}
Consider the graphs shown in Figure~\ref{fig:elementary_graphs}.

The graph $X^{0}(k)$, $k\geq1$, consists of a bold line ``intersected'' by another $k-1$ bold lines, the graph $X^{\pm}(k)$ is obtained from $X^{0}(k)$ by adding a thin edge directed either up or down.
The vertex of degree $1$ on that thin edge can be either $e$- or $\partial$-one.

The graph $Y$ is determined by five numbers: $z, b_{-}, b_{+}, e_{-}, e_{+}$, where $z$ is the total number of cycles in $Y$, $b_{-}$ (resp. $e_{-}$) is the total number of $\partial$-vertexes (resp. $e$-vertexes) being local minimums for the KR-function, and $b_{+}$ and $e_{+}$ correspond to local maximums.

We will assume that KR-function on $X^{*}(k)$ surjectively maps this graph onto $[0,1]$, while KR-function of $Y$ maps it into interval $(0,1)$.

\begin{figure}[ht]
\centerline{
\includegraphics[height=4cm]{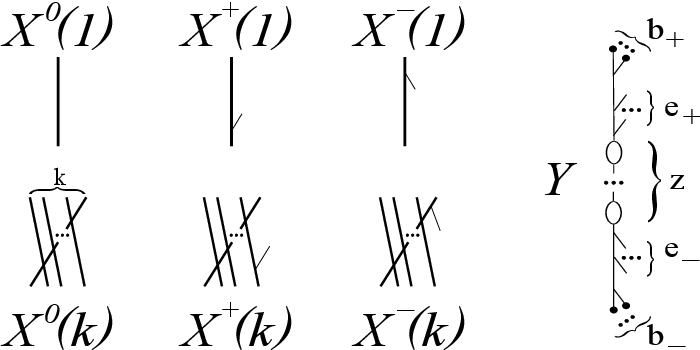}
}
\protect\caption{Elementary blocks of canonical KR-graphs}\label{fig:elementary_graphs}
\end{figure}

\begin{figure}[ht]
\centerline{
\includegraphics[height=2cm]{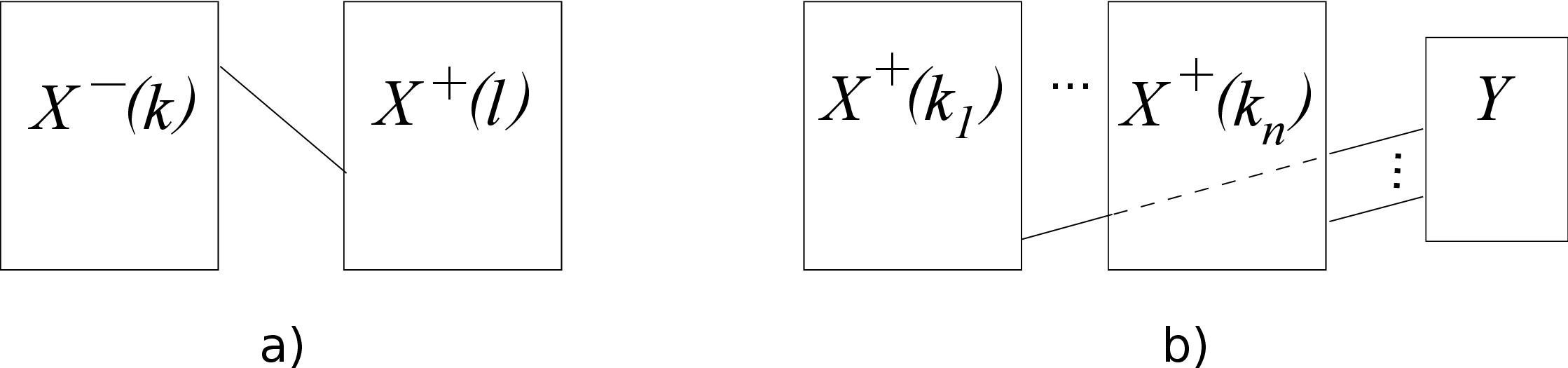}
}
\protect\caption{Canonical KR-graph $\KRf$}\label{fig:canonical_KR_graph}
\end{figure}

\begin{definition}
Let $\func\in\FMRxi$.
Say that $\func$ is {\bfseries canonical} if it is generic and its KR-graph $\KRf$ has one of the following forms:
\begin{enumerate}
 \item 
coincide either with one of $X^{*}(k)$ for some $k\geq1$, or with $Y$ for some $e_{\pm}$, $b_{\pm}$, and $z$;
 \item
is a union of $X^{-}(k)$ with $X^{+}(l)$ with common thin edge for some $k,l\geq1$, see Figure~\ref{fig:canonical_KR_graph}a);
 \item  
is a union of some $X^{+}(k_i)$, $i=1,\ldots,n$, connected along their thin edges with $Y$, see Figure~\ref{fig:canonical_KR_graph}b).
\end{enumerate}

Every maximal bold connected subgraph of $\KRf$ will be called an {\bfseries $X$-block}. Evidently, such a block is isomorphic with $X^{0}(k)$ for some $k$.
\end{definition}

\begin{lemma}\label{lm:canon-KR-graph_determ_crit_type}
Let $\func\in\FMRxi$ be a canonical function.
Then the numbers $c_i(\func)$, $k_{\Wman}$, and $\eps_{\Wman}(\func)$ are completely determined by its KR-graph $\KRf$ and wise verse.
Moreover, every $X$-block of $\KRf$ corresponds to a unique boundary component of $\Mman$.
In particular, the collection of $X$-blocks in $\KRf$ is determined (up to order) by the partition $\xi$ of $\partial\Mman$, and therefore does not depend on a canonical function $\func$.
\end{lemma}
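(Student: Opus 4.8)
The plan is to unwind the structure of a canonical function and read off each numerical invariant directly from the combinatorics of the KR-graph, and conversely to reconstruct the list of invariants from a given canonical graph.

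First I would analyze the three building blocks. For $X^{0}(k)$: a bold line crossed by $k-1$ other bold lines encodes $k-1$ saddle points whose critical level sets are arcs, so each such crossing contributes $1$ to $c_1(\func)$; since $X^{0}(k)$ has no $e$-vertexes it contributes nothing to $c_0$ or $c_2$. Passing to $X^{\pm}(k)$ adds one thin edge with a degree-$1$ vertex at the top (for $X^{+}$) or bottom (for $X^{-}$); if that vertex is an $e$-vertex it contributes $1$ to $c_2$ (resp. $c_0$), and if it is a $\partial$-vertex it instead records a boundary component $\Wman$ with $k_{\Wman}=0$ and $\eps_{\Wman}(\func)=+1$ (resp. $-1$). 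For the block $Y$ with parameters $(z,b_{-},b_{+},e_{-},e_{+})$: each of the $z$ independent cycles is produced by a saddle whose level set is a circle bounding from below, hence contributes $1$ to $c_1$; the $e_{\pm}$ count $e$-vertexes, giving $e_{+}$ to $c_2$ and $e_{-}$ to $c_0$; and the $b_{\pm}$ count $\partial$-vertexes, each recording a boundary component with $k_{\Wman}=0$ and the corresponding sign $\eps_{\Wman}=\pm1$. In the composite graphs (items (2) and (3) of the definition) the blocks are glued along thin edges, which does not create or destroy any vertex or crossing, so the contributions simply add up. Collecting these, I obtain explicit formulas: $c_1(\func)$ is the total number of bold-line crossings over all $X$-blocks plus the number $z$ of cycles; $c_0(\func)$ (resp. $c_2(\func)$) is the number of $e$-vertexes that are local minima (resp. maxima); and the set of $\eps_{\Wman}=\pm1$ is in bijection with the set of $\partial$-vertexes carrying the corresponding sign. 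This proves that the KR-graph $\KRf$ determines the numbers $c_i(\func)$, $k_{\Wman}$, and $\eps_{\Wman}(\func)$.

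For the converse, given the data — the partition $\xi$ of $\partial\Mman$ (hence the numbers $k_{\Wman}$), the $c_i$, and the signs $\eps_{\Wman}$ — I would show the canonical graph is forced. Each boundary component $\Wman$ with $k_{\Wman}\geq1$ must be matched with an $X^{0}(k_{\Wman})$-block, and the number of bold crossings it carries is pinned down by $k_{\Wman}$; this also proves the last assertion of the lemma, that each $X$-block corresponds to a unique boundary component of $\Mman$ and the whole collection of $X$-blocks is determined up to order by $\xi$. The $\partial$-vertexes are then distributed among these blocks (and the $Y$-block) according to the prescribed signs $\eps_{\Wman}$ — here one uses that a canonical graph has exactly one thin edge emanating per $X^{\pm}$-block and that the gluing pattern in cases (1)–(3) is rigid once one knows how many blocks point up versus down. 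Finally the leftover cycles and $e$-vertexes are absorbed into the $Y$-block: setting $e_{\pm}$ equal to the number of index-$2$ (resp. index-$0$) critical points not already accounted for by $X^{\pm}$-blocks, and $z = c_1 - \sum_{\Wman}(k_{\Wman}-1)$, determines $Y$ completely. One must check this $z$ is nonnegative and that the resulting graph really is one of the listed canonical forms — this is where a small amount of bookkeeping, matching the Euler-characteristic/genus constraints of $\Mman$ against the graph, is needed.

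The main obstacle I anticipate is the converse direction, specifically verifying that the distribution of $\partial$-vertexes and the choice between canonical forms (2) and (3) (i.e. whether a nontrivial $Y$-block is present) is genuinely unique given the invariants, rather than merely possible. Concretely, one has to rule out that two different canonical graphs with the same $X$-block collection and the same counts of up/down $\partial$- and $e$-vertexes could still differ in how the thin edges are interconnected; this rests on the structural constraints built into the definition of ``canonical'' (at most one $Y$-block, thin edges joined in the specific patterns of Figure~\ref{fig:canonical_KR_graph}), so the argument is essentially a case check over the three listed forms. The forward direction is routine once the three elementary blocks are understood, since gluing along thin edges is additive on all the invariants in play.
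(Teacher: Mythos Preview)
Your overall strategy---reading each invariant from the block structure of the canonical KR-graph---is exactly what the paper does, though the paper is terser: it simply records that $c_0(\func)$ and $c_2(\func)$ count the $e$-vertices which are local minima and maxima of $\func_{KR}$, that $c_1(\func)$ equals the total number of vertices of degree $3$ or $4$ in $\KRf$, that each $X$-block corresponds to a boundary component $\Wman$ with $k_{\Wman}$ equal to the number of local minima of $\func_{KR}$ restricted to that block, and that the $\partial$-vertices encode the components with $k_{\Wman}=0$ together with the sign $\eps_{\Wman}$. The converse is left implicit.

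Your block-by-block bookkeeping, however, miscounts $c_1$. Two sources of saddles are missing. First, when the thin edge is attached to the bold part of $X^{0}(k)$ to form $X^{\pm}(k)$, the attachment point is itself a new degree-$3$ vertex (the configuration of Figure~\ref{fig:vertexes_of_KRf}b), hence a saddle you have not recorded. Second, the block $Y$ carries degree-$3$ vertices not only from its $z$ cycles but at every branch point where an $e$-vertex, a $\partial$-vertex, or an incoming thin edge from an $X^{+}$-block is attached; each of these is a saddle of type Figure~\ref{fig:vertexes_of_KRf}a). Relatedly, your claim that gluing along thin edges ``does not create or destroy any vertex or crossing'' is false: in cases (2) and (3) of the definition the free degree-$1$ endpoint of the thin edge of $X^{\pm}(k)$ is absorbed into the adjacent block and no longer contributes an $e$- or $\partial$-vertex. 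These errors make your reconstruction formula $z = c_1 - \sum_{\Wman}(k_{\Wman}-1)$ incorrect, and the converse direction then does not go through as written. The paper's global count (``$c_1$ equals the number of vertices of degree $3$ and $4$'') sidesteps this entirely; if you want to keep the block-by-block viewpoint, you must track the attachment vertices explicitly and redo the arithmetic for $z$.
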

\begin{proof}
Since $\func$ is generic, $c_0(\func)$ (resp. $c_2(\func)$) is equal to the total number of vertexes of degree $1$ being local minimums (resp. local maximums) of the restriction of $\func_{KR}$ to $Y$, while $c_1(\func)$ is equal to the total number of vertexes of $\KRf$ of degrees $3$ and $4$.

Furthermore, it easily follows from Figure~\ref{fig:vertexes_of_KRf}c) that every $X$-block $N$ of $\KRf$ corresponds to collar of some boundary component $\Wman$ of $\Mman$ such that $k_{\Wman}$ is equal to the total number of local minimums (=local maximums) of the restriction of $\func_{KR}$ to $N$.

Finally, every connected component $\Wman$ of $\partial\Mman$ with $k_{\Wman}=0$ corresponds to a $\partial$-vertex $w$ on $Y$.
Moreover, $\eps_{\Wman}=-1$ (resp. $\eps_{\Wman}=+1$) iff $w$ is a local minimum (resp. local maximum) of the restriction of $\func_{KR}$ to $Y$.
\end{proof}

\begin{lemma}\label{lm:f_is_homotopic_to_a_canonical_func}
Let $\func\in\FMRxi$.
Then $\func$ is homotopic in $\FMRxi$ to some canonical function.
\end{lemma}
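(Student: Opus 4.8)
The plan is to follow the scheme used in~\cite{Kudryavtseva:MatSb:1999,Maks:CMH:2005} for the proof of Theorem~\ref{th:pi0_FMPl}, adapting it to surfaces with corners. First I would homotope $\func$ inside $\FMRxi$ to a generic function, which is possible by the remark made just before Definition~\ref{defn:equivalence_of_KR_func}. After that the argument is an induction on a suitable combinatorial complexity of the KR-graph $\KRf$ measuring how far it is from one of the canonical forms of Figures~\ref{fig:elementary_graphs} and~\ref{fig:canonical_KR_graph}: whenever $\func$ is not yet canonical, this complexity can be lowered by an \emph{elementary homotopy} in $\FMRxi$, and the induction terminates at a canonical function.

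Three kinds of elementary homotopy are used. (1) \emph{Rearrangement of critical levels}: if two critical points of $\func$, or a critical point and a boundary component $\Wman$ with $k_{\Wman}=0$, take consecutive values and the corresponding vertices of $\KRf$ are not joined by an edge, then the standard Morse-theoretic rearrangement interchanges their values through functions with only non-degenerate critical points. (2) \emph{Edge slides}: when an edge of $\KRf$ splits into two branches which later re-merge, an ambient isotopy of $\Mman$ supported in a trivial bicollar of an intermediate regular level (an arc or a circle), composed with a rearrangement, changes the order of the splitting and the merging; orientability of $\Mman$, exactly as in Lemma~\ref{lm:KR-equivalence-of-KR-functions}, is what makes such a bicollar trivial and keeps the isotopy inside $\FMRxi$. (3) \emph{Standardising the bold part}: using the local picture of Figure~\ref{fig:vertexes_of_KRf}c) one moves each maximal bold connected subgraph so that $\func_{KR}$ restricted to it surjects onto $[0,1]$ and so that it is attached to the rest of $\KRf$ along a single thin edge. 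Applying (1)--(3) one first brings the bold structure into the form of $X$-blocks $X^{0}(k_{\Wman})$, one for each boundary component $\Wman$ with $k_{\Wman}\geq1$, each carrying at most one thin edge; cutting $\Mman$ along the regular level circles and arcs that separate the corresponding collars from the rest reduces the remaining part to a Morse function with locally constant boundary on a compact orientable surface, to which the $\RRR$-case of Theorem~\ref{th:pi0_FMPl} (more precisely, its reduction to a canonical function) applies and yields the $Y$-block with the prescribed $z$, $b_{\pm}$, $e_{\pm}$ together with thin edges $X^{+}(k_i)$. The resulting KR-graph is then one of the three canonical types, according as $\Mman$ produces a single $X^{*}(k)$ (case~(1)), two bold blocks joined along a thin edge (case~(2)), or a nontrivial $Y$-part (case~(3)).

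The main obstacle --- and the only genuinely new point compared with the closed-surface argument of~\cite{Maks:CMH:2005} --- is verifying that every elementary homotopy keeps us inside $\FMRxi$, i.e.\ preserves conditions~(a)--(c): non-degeneracy of interior critical points, the corner set $K$, the identities $\func^{-1}(0)=T_{0}$ and $\func^{-1}(1)=T_{1}$, the prescribed sign of the derivative along $T_{+}$ and $T_{-}$, and constancy with value in $(0,1)$ on the components with $k_{\Wman}=0$. The slides of type~(2) now have to be carried out in bicollars of level arcs that meet $\partial\Mman$ transversally at points of $K$, without disturbing the strict monotonicity on the arcs of $T_{+}$ and $T_{-}$; this is arranged by choosing all the homotopies to be supported in $\Mman\setminus U$ for a fixed neighbourhood $U$ of $T_{0}\cup T_{1}$ on which, by condition~(b), $\func$ has no critical points and already has standard behaviour. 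The same choice of supports shows, incidentally, that the homotopy can be taken relative to a neighbourhood of $T_{0}\cup T_{1}$, which is what will be needed for the last assertion of Theorem~\ref{th:pi0_FMRxi}.
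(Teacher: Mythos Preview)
Your proposal is correct and takes essentially the same approach as the paper, which simply displays a list of elementary surgeries on the KR-graph (Figure~\ref{fig:surgeries_of_KR_graph}), asserts that each is realized by a homotopy in $\FMRxi$, and invokes~\cite[Lemma~11]{Kudryavtseva:MatSb:1999} for the combinatorial reduction to canonical form---your three elementary moves and the separation of the bold $X$-blocks from the thin $Y$-part are precisely a spelled-out version of this. One small slip: the regular level arcs of $\func$ meet $\partial\Mman$ at interior points of the arcs of $T_{+}\cup T_{-}$, not at the corner set $K$.
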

\begin{proof}
Consider the following elementary surgeries of a KR-graph shown in Figure~\ref{fig:surgeries_of_KR_graph}.
\begin{figure}[ht]
\centerline{\includegraphics[height=3.5cm]{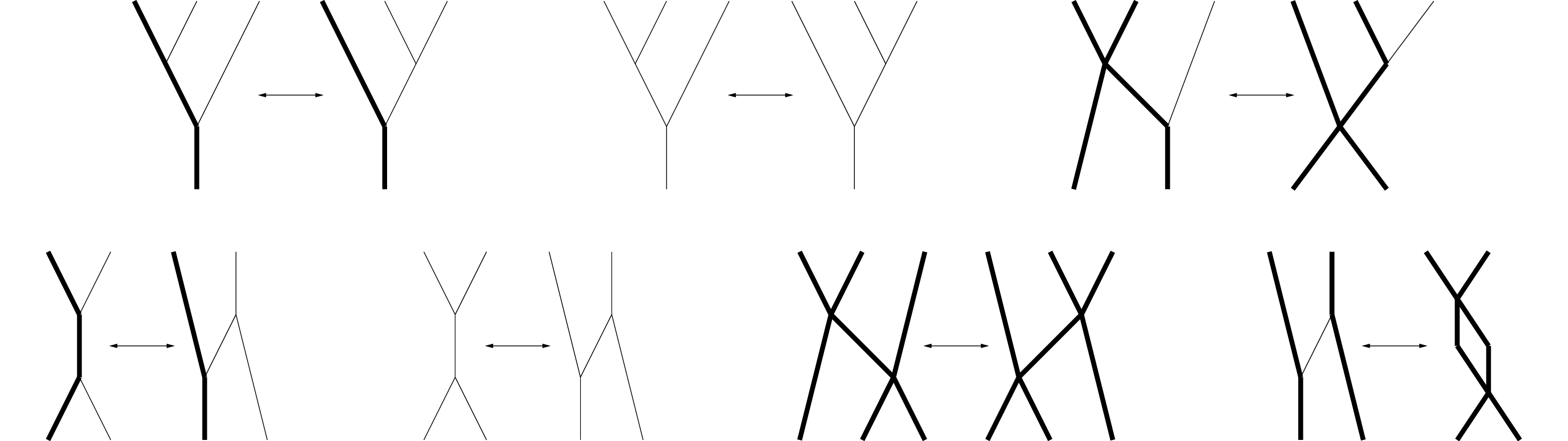}}
\caption{Elementary surgeries of KR-graph}\label{fig:surgeries_of_KR_graph}
\end{figure}
%
%
It is easy to see that each of them can be realized by a deformation of $\func$ in $\FMRxi$.
Then similarly to~\cite[Lemma~11]{Kudryavtseva:MatSb:1999} one can reduce any KR-graph of $\func\in\FMRxi$ to a canonical form using these surgeries.
We leave the details for the reader.
\end{proof}

\begin{lemma}
Let $\func,\gfunc\in\FMRxi$ be two canonical functions satisfying assumptions {\rm(i)} and {\rm(ii)} of Theorem~\ref{th:pi0_FMRxi}.
Then $\func$ (resp. $\gfunc$) is homotopic in $\FMRxi$ to another canonical function $\tilde\func$ (resp. $\tilde\gfunc$) such that $\tilde\gfunc=\tilde\func\circ\dif$ for some diffeomorphism $\dif:\Mman\to\Mman$ fixed near $\partial\Mman$.
\end{lemma}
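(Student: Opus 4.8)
The plan is to combine Lemma~\ref{lm:canon-KR-graph_determ_crit_type} with a boundary-relative strengthening of Lemma~\ref{lm:KR-equivalence-of-KR-functions}. First, since $\func$ and $\gfunc$ are defined on the same surface $\Mman$ with the same subdivision $\xi$ (so the numbers $k_{\Wman}$ coincide for both), assumptions (i) and (ii) of Theorem~\ref{th:pi0_FMRxi} together with Lemma~\ref{lm:canon-KR-graph_determ_crit_type} show that $\KRf$ and $\KRg$ are the same canonical KR-graph; in particular the KR-functions of $\func$ and $\gfunc$ are equivalent in the sense of Definition~\ref{defn:equivalence_of_KR_func}.

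The next step is to make $\func$ and $\gfunc$ literally agree near $\partial\Mman$. A canonical function has a rigid form in a collar of each boundary component $\Wman$: if $k_{\Wman}=0$ it is a constant $v_0\in(0,1)$ plus a function of the normal coordinate which is monotone, with the direction of monotonicity prescribed by $\eps_{\Wman}$; if $k_{\Wman}\geq1$ it is, after a reparametrisation, the standard model on a collar determined by the $4k_{\Wman}$-arc subdivision of $\Wman$ and the $k_{\Wman}-1$ saddles of the $X$-block $X^{0}(k_{\Wman})$ lying just inside $\Wman$. Because $\eps_{\Wman}(\func)=\eps_{\Wman}(\gfunc)$ and the subdivision $\xi$ is common to both, all data entering these local models agree. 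Hence a finite sequence of deformations in $\FMRxi$ --- sliding each $\partial$-vertex along its thin edge to a fixed height, reparametrising the boundary arcs $A_i,B_i,C_i,D_i$, and straightening collars, none of which alters the combinatorics of the KR-graph, so canonicity (and the KR-equivalence just established) is preserved --- carries $\func$ to a canonical $\func'$ and $\gfunc$ to a canonical $\gfunc'$ with $\func'=\gfunc'$ on some neighbourhood $U$ of $\partial\Mman$.

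Now I would rerun the patching argument behind Lemma~\ref{lm:KR-equivalence-of-KR-functions} (in the style of~\cite{Kulinich:MFAT:1998,BolsinovFomenko:1997}) relatively to $U$: over the common KR-graph $\KRf=\KRg$ one builds a fibrewise diffeomorphism $\dif:\Mman\to\Mman$ inducing the identity of the graph, starting from $\dif=\id$ over the part of the graph covered by $U$ and extending it over the remaining compact interior edges and vertices, using orientability of $\Mman$ to guarantee that the bands over the edges are untwisted so that the extension is unobstructed. This yields $\dif$, fixed near $\partial\Mman$, together with an orientation-preserving diffeomorphism $\phi:I\to I$ fixing $0$, $1$ and every $\partial$-vertex height, such that $\gfunc'=\phi^{-1}\circ\func'\circ\dif$. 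Choosing an isotopy $\phi_t$ from $\id_I$ to $\phi$ through such diffeomorphisms, the family $\phi_t^{-1}\circ\func'\circ\dif$ is a homotopy from $\func'\circ\dif$ to $\gfunc'$ inside $\FMRxi$ (as $\dif$ preserves the subdivision $\xi$ and each $\phi_t$ fixes $0$ and $1$). Since composing a canonical function with a diffeomorphism again produces a canonical function, $\tilde\func:=\func'$ and $\tilde\gfunc:=\func'\circ\dif$ --- the latter canonical and homotopic in $\FMRxi$ to $\gfunc$ --- satisfy $\tilde\gfunc=\tilde\func\circ\dif$ with $\dif$ fixed near $\partial\Mman$, as required.

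I expect the main obstacle to be the combination of the second and third steps, that is, obtaining the \emph{relative} version of Lemma~\ref{lm:KR-equivalence-of-KR-functions}. The delicate part is the normalisation near the corner components (those with $k_{\Wman}\geq1$): one must deform $\func$ near $\partial\Mman$ onto a fixed model while respecting the constraints $\func^{-1}(0)=T_0$, $\func^{-1}(1)=T_1$, the strict monotonicity of $\func$ on $T_{+}$ and $T_{-}$, and the smoothness at the corners $K$, all without leaving $\FMRxi$; this forces a careful interplay of the elementary surgeries of Lemma~\ref{lm:f_is_homotopic_to_a_canonical_func} with collar and isotopy arguments.
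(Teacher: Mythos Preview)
Your overall strategy parallels the paper's: establish KR-equivalence of $\func$ and $\gfunc$, then invoke Lemma~\ref{lm:KR-equivalence-of-KR-functions} to produce $\dif$. You reorganise the boundary step---normalising $\func$ and $\gfunc$ to coincide on a collar $U$ of $\partial\Mman$ and then constructing $\dif$ relative to $U$---whereas the paper first obtains some $\dif$ from Lemma~\ref{lm:KR-equivalence-of-KR-functions} and only afterwards modifies $\gfunc$ (in the style of~\cite{Kudryavtseva:MatSb:1999, Maks:CMH:2005}) so that $\dif$ preserves orientation, each component $\Wman$, and the subdivision $\xi$, whence $\dif$ becomes isotopic to the identity near $\partial\Mman$.

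There is, however, a genuine gap between your step~1 and your step~3. Lemma~\ref{lm:canon-KR-graph_determ_crit_type} together with (i)--(ii) tells you that $\KRf$ and $\KRg$ have the same $Y$-block and the same collection of $X$-blocks, but only, as the paper says, ``up to order'': the definition of a canonical graph does not pin down which $X^{+}(k_i)$-block attaches where on $Y$, and each $X$-block carries an implicit label, namely the boundary component $\Wman$ it comes from. So the KR-equivalence you get in step~1 is an \emph{unlabelled} isomorphism; when $k_{\Wman_1}=k_{\Wman_2}$ it may well send the $X$-block of $\Wman_1$ in $\KRf$ to the $X$-block of $\Wman_2$ in $\KRg$. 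Your step~3, by contrast, needs the equivalence $H$ to restrict to the identity over the collar $U$, hence in particular to match $X$-blocks with the \emph{same} boundary label. Arranging $\func'=\gfunc'$ on $U$ in step~2 does not secure this, because the thin edges joining the $X$-blocks to $Y$ lie in $\Mman\setminus U$ and their attaching pattern is unconstrained by what happens on $U$. The paper closes exactly this gap by applying the elementary surgeries of Figure~\ref{fig:surgeries_of_KR_graph} to $\KRg$ to bring its $X$-blocks into the correct order before invoking Lemma~\ref{lm:KR-equivalence-of-KR-functions}; you need the same manoeuvre, and it belongs in the body of your argument rather than among the ``expected obstacles''.
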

\begin{proof}
It follows from Lemma~\ref{lm:canon-KR-graph_determ_crit_type} and assumptions on $\func$ and $\gfunc$ that their KR-graphs have the same $Y$-blocks and the same (up to order) $X^{\pm}(k)$-blocks.
Then, using surgeries of Figure~\ref{fig:surgeries_of_KR_graph} applied to $\KRg$ we can reduce the situation to the case when KR-functions of $\func$ of $\gfunc$ are KR-equivalent.
Whence by Lemma~\ref{lm:KR-equivalence-of-KR-functions} we can also assume that there exists a diffeomorphism $\dif:\Mman\to\Mman$ such that $\gfunc=\func\circ\dif$.
Moreover, changing $\gfunc$ similarly to~\cite{Kudryavtseva:MatSb:1999} or~\cite{Maks:CMH:2005} one can choose $\dif$ so that it preserves orientation of $\Mman$, maps every connected component $\Wman$ of $\partial\Mman$ onto itself, and preserves subdivision $\xi$ on $\Wman$.
Then using the assumptions on $\func$ and $\gfunc$ near $\partial\Mman$, one can show that $\dif$ is isotopic to the identity near $\partial\Mman$.
\end{proof}

\begin{lemma}\label{lm:f_is_homotopic_to_fh}
Let $\dif:\Mman\to\Mman$ be a diffeomorphism fixed near $\partial\Mman$ and $\func\in\FMRxi$ be a canonical function.
Then $\func\circ\dif$ is homotopic in $\FMRxi$ to $\func$ relatively some neighbourhood of $\partial\Mman$.
\end{lemma}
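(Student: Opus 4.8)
The plan is to reduce the statement to a short list of ``model'' Dehn twists and to check each of them by hand, exploiting that a canonical $\func$ has a completely rigid normal form near $\partial\Mman$, near its critical points, and along its regular level circles.

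First I would record two elementary facts about the map $\dif\mapsto\func\circ\dif$. If $\dif_t$ is an isotopy of $\Mman$ fixed on a neighbourhood $U$ of $\partial\Mman$, then $t\mapsto\func\circ\dif_t$ is a path in $\FMRxi$ fixed on $U$; and if $\psi_t$ is a homotopy in $\FMRxi$ fixed on $U$ and $u\colon\Mman\to\Mman$ is a diffeomorphism equal to the identity on $U$, then $\psi_t\circ u$ is again such a homotopy (precomposition by $u$ preserves the corner subdivision $\xi$ and the sets $T_0,T_1,T_\pm$, since $u$ is supported away from $\partial\Mman$). Combining these, if $\dif$ is isotopic rel $\partial\Mman$ to a product $g_1\circ\cdots\circ g_m$ of Dehn twists supported in $\Int\Mman$, and if $\func\circ g_i\simeq\func$ in $\FMRxi$ rel a neighbourhood of $\partial\Mman$ for each $i$, then
\[
\func\circ\dif
\ \simeq\
\func\circ g_1\circ(g_2\circ\cdots\circ g_m)
\ \simeq\
\func\circ(g_2\circ\cdots\circ g_m)
\ \simeq\ \cdots\ \simeq\
\func ,
\]
each step replacing $\func\circ g_i$ by $\func$ while keeping $g_{i+1}\circ\cdots\circ g_m$ fixed on the right. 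Since $\Mman$ is orientable, its mapping class group rel $\partial\Mman$ is generated by Dehn twists, so it would remain to prove
\begin{equation}\label{eq:proposal_single_twist}
\func\circ T_\gamma\ \simeq\ \func\quad\text{in }\FMRxi\text{, rel a neighbourhood of }\partial\Mman,
\end{equation}
for every curve $\gamma\subset\Int\Mman$ in a chosen generating collection.

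The next step handles the ``horizontal'' generators. If $\gamma$ is isotopic rel $\partial\Mman$ to a circle component of a regular level set $\func^{-1}(c)$, pick a thin tubular neighbourhood $N\cong S^1\times(-\eps,\eps)$ of that level circle on which $\func$ is the projection to the second coordinate; taking $T_\gamma$ supported in $N$ and preserving each slice $S^1\times\{s\}$ gives $\func\circ T_\gamma=\func$ identically, and composing with the isotopy proves \eqref{eq:proposal_single_twist}. Because $\func$ is canonical, the circle components of its regular level sets form a system of disjoint curves cutting $\Mman$ into disks, annuli, pairs of pants, and the handle pieces coming from the cycles of the $Y$-block; I would argue that the twists along these level circles, together with finitely many twists along curves transverse to $\func$ — one ``core'' curve through each handle of the $Y$-block, plus a bounded number confined to standard neighbourhoods dictated by the canonical form (e.g. a neighbourhood of a single saddle, of an extremum, of an annular neck, or of an $X$-block collar with its subdivision $\xi$) — generate all of $\mathrm{MCG}(\Mman,\partial\Mman)$. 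For each exceptional curve one is then in a model situation where $\func$ has an explicit normal form, and one writes down an isotopy of $\Mman$ from $\id$ to $T_\gamma$ (a shear, respectively a rotation, supported in the model) whose composition with $\func$ is a path in $\FMRxi$ fixed near $\partial\Mman$; this is the same type of local computation made in \cite{Kudryavtseva:MatSb:1999,Maks:CMH:2005}, so I would only sketch it.

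The hard part is precisely this last reduction: one must (i) produce a generating collection for $\mathrm{MCG}(\Mman,\partial\Mman)$ consisting of twists along level circles of $\func$ plus a bounded number of transverse curves each confined to one of the standard model neighbourhoods, and (ii) verify for each exceptional curve that the twist can be realised through $\FMRxi$ without creating a degenerate critical point, a critical point on $\partial\Mman$, or a function violating $\func^{-1}(0)=T_0$, $\func^{-1}(1)=T_1$, the monotonicity on $T_\pm$, or the corner subdivision $\xi$. Both (i) and (ii) rely essentially on $\Mman$ being orientable and on $\func$ being in canonical form, which is exactly what keeps the list of model neighbourhoods finite and the twists on them explicit.
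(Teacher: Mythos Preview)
Your reduction to single Dehn twists and the treatment of the ``horizontal'' twists are sound, and the inductive scheme $\func\circ g_1\cdots g_m\simeq\func\circ g_2\cdots g_m\simeq\cdots$ is set up correctly. But your route is genuinely different from the paper's, and longer. The paper does not redo any Dehn--twist model computations in the corner setting. Instead it uses Lemma~\ref{lm:canon-KR-graph_determ_crit_type}: for a canonical $\func$, each $X$-block of $\KRf$ is realised by a collar $N(\Wman)$ of a boundary component $\Wman$, and since $\dif$ is already the identity near $\partial\Mman$ one may enlarge the fixed set so that $\dif=\id$ on every $N(\Wman)$. Cutting off all these collars produces a surface $\Mman'$ without corners on which the induced function is locally constant on $\partial\Mman'$, and the induced diffeomorphism is still fixed near $\partial\Mman'$; the statement then becomes exactly the $\RRR$-case of Theorem~\ref{th:pi0_FMPl}, which is quoted from~\cite{Maks:CMH:2005}. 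So the paper's proof is a one-line reduction to an already established theorem.

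What you gain with your approach is self-containment: you would reprove the base case rather than invoke it. What you lose is economy, and you also take on an extra obligation you do not need. Your ``hard part'' (i)--(ii) asks for a generating set for $\mathrm{MCG}(\Mman,\partial\Mman)$ with transverse curves possibly entering the $X$-block collars, and for new model checks there respecting $T_0,T_1,T_\pm$ and $\xi$. But because $\dif$ is fixed on those collars, every generating curve can be taken in the complement of $\bigcup_\Wman N(\Wman)$; once you notice this, all of your model computations collapse to the corner-free ones already carried out in~\cite{Kudryavtseva:MatSb:1999,Maks:CMH:2005}, and your argument converges to the paper's. In short: your plan works, but the paper's shortcut---excise the $X$-block collars first---makes your step (i)--(ii) unnecessary.
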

\begin{proof}
Since every $X$-block of $\KRf$ corresponds to a collar $N(\Wman)$ of some boundary component $\Wman$ of $\partial\Mman$, we can assume that $\dif$ is fixed on some neighbourhood of $N(\Wman)$.
Therefore we may cut off $N(\Wman)$ from $\Mman$ and assume that $\func$ takes constant values at each boundary component of $\partial\Mman$.
Then $\func$ is homotopic to $\func\circ\dif$ relatively some neighbourhood of $\partial\Mman$ by the arguments similar to the proof of Theorem~\ref{th:pi0_FMPl}, see~\cite{Maks:CMH:2005}.
\end{proof}

Theorem~\ref{th:pi0_FMRxi} now follows from Lemmas~\ref{lm:f_is_homotopic_to_a_canonical_func}-\ref{lm:f_is_homotopic_to_fh}.


\end{document}